\newtheorem{theorem}{Theorem}
\newtheorem{definition}[theorem]{Definition}
\newtheorem{lemma}[theorem]{Lemma}
\newtheorem{remark}[theorem]{Remark}
\newenvironment{proof}[1][Proof]{\noindent\textbf{#1.} }{\ \rule{0.5em}{0.5em}}
\newcommand{\bpartial}{\mathop{\partial\kern -4pt\raisebox{.8pt}{$|$}}}
\newcommand{\bra}{\mathopen{[\kern-1.6pt[}}
\newcommand{\ket}{\mathclose{]\kern-1.5pt]}}
\newcommand{\bbra}{\mathopen{[\kern-2.2pt[\kern-2.3pt[}}
\newcommand{\bket}{\mathclose{]\kern-2.1pt]\kern-2.3pt]}}
\begin{document}

\title{\large{\bf Euler--Poincar\'e Dynamics and Control on Lie Groupoids}}

\author{
  \small{\bf Ghorbanali Haghighatdoost}
  \\
  {\small{\em Department of Mathematics, Azarbaijan Shahid Madani University, Tabriz, Iran}}
  \\
  {\small{E-mail: \em gorbanali@azaruniv.ac.ir}}
  \\
  {\small{\em Telephone: +9831452392, Fax: +9834327541}}
}

\maketitle

\begin{abstract}
We extend the Euler--Poincar\'e (EP) formalism from Lie groups to Lie groupoids for optimal control problems. While Lie algebroids provide the standard infinitesimal framework, the groupoid formulation enables global trajectory reconstruction and naturally accommodates systems with non-trivial base manifolds. We derive the reduced EP equations on a Lie groupoid, specialize to the trivial groupoid $M \times \mathfrak{G} \times M$, and illustrate the framework with a generalized rigid body on the sphere $S^2 \times SO(3) \times S^2$. A biological application to collective cell migration on a spherical tissue shows that the coupled dynamics of spatial migration and internal polarity rearrangement leads to optimal migration times of approximately 74 minutes, consistent with experimental observations. This work demonstrates how Lie groupoid methods broaden geometric control theory beyond standard rigid body systems.

\noindent {\bf Keywords}: Lie groupoid, Lie algebroid, geometric optimal control, Euler--Poincar\'e equations.

\noindent {\bf AMS}: 22A22, 70H45, 49J15.
\end{abstract}

\section{Introduction}

The Euler--Poincar\'e equations provide a geometric framework for describing the dynamics of systems with symmetry and play a fundamental role in optimal control theory and geometric mechanics \cite{BaWi}. Traditionally, these equations have been studied in the setting of Lie groups, where the underlying algebraic and geometric structures allow one to express the equations of motion in a compact and elegant form (see Marsden \& Ratiu \cite{MarRat}, Holm, Marsden \& Ratiu \cite{HolMarRat}). Such formulations not only unify various classical mechanical models but also facilitate the analysis of optimal trajectories and stability properties \cite{Bloch}.


Mechanics on Lie algebroids has been extensively developed over the past two decades. A Lagrangian defined on the tangent bundle of a Lie groupoid that is right-invariant reduces naturally to a Lagrangian on the associated Lie algebroid, yielding Euler--Poincar\'e-type equations. This approach has been pursued by many authors, including Weinstein \cite{Wei}, Mackenzie \cite{Mack}, de León, Marrero and Martínez \cite{LeMarMar}, Bos \cite{Bos}, and Marle \cite{Marle}, among others. In this setting, the Lie algebroid serves as the infinitesimal object that captures the essential dynamics, and it is the standard framework for continuous mechanical systems with symmetry.


If Lie algebroids already provide an efficient infinitesimal description, one may ask: what is the advantage of working directly with Lie groupoids? The present work offers three answers to this question, with particular emphasis on optimal control problems.

\textbf{First}, the Lie groupoid formulation allows global reconstruction of trajectories via the kinematic equation $\dot{g}_t = R_{g_t^*}X_t$. While the Lie algebroid describes the reduced dynamics on the infinitesimal level, the groupoid provides the global configuration space and enables the recovery of full trajectories from reduced data. This is essential for optimal control problems with prescribed boundary conditions on the base manifold.

\textbf{Second}, Lie groupoids naturally accommodate systems where the configuration space is not a Lie group but a more general structure with a non-trivial base manifold. For example, the trivial groupoid $S^2 \times SO(3) \times S^2$ captures both spatial position (on the sphere $S^2$) and internal orientation ($SO(3)$) within a single geometric object. This coupling is not accessible when working with a Lie group alone.

\textbf{Third}, while discrete mechanics is indeed an important application of groupoids (see, e.g., the work of Weinstein and others), continuous mechanics on Lie groupoids has also received substantial attention \cite{Bos, HA2}. The present work contributes to this continuous setting by focusing specifically on optimal control problems, which have received less attention in the groupoid literature compared to Hamiltonian dynamics.


The reduced Euler--Poincar\'e equations on a Lie algebroid are well known in the literature. In particular, for the case $AG = TS^2 \times \mathfrak{so}(3)$ that appears as an example in this paper, the resulting equations can be derived using standard Lie algebroid methods. The contribution of the present work is not to claim these equations as new, but rather:

\begin{enumerate}
\item To derive them directly on the Lie groupoid level, emphasizing the global geometric picture and the reconstruction of trajectories.
\item To extend the framework to include optimal control problems, where the cost functional is expressed in reduced variables and the kinematic equation on the groupoid provides the link between reduced and full dynamics.
\item To illustrate the framework with a concrete biological application — collective cell migration on a spherical tissue — where the groupoid structure $S^2 \times SO(3) \times S^2$ naturally models the coupled dynamics of spatial migration and internal polarity rotation.
\end{enumerate}


Beyond its theoretical significance, the Euler--Poincar\'e formalism admits concrete applications in biology. A notable example is the modeling of collective cell migration on spherical tissues, where the base manifold $S^2$ represents the embryo surface and the group $SO(3)$ encodes internal polarity dynamics of cells (e.g., actin orientation). In this setting, the EP equations naturally describe the coupled evolution of polarity rearrangement and spatial migration. Moreover, an associated optimal control problem predicts biologically realistic migration times and velocities, consistent with experimental data. Such a formulation highlights how the generalization from Lie groups to Lie groupoids incorporates both internal symmetry and spatial displacement in a unified framework, offering new perspectives for the mathematical modeling of morphogenesis \cite{GrGl, ViZa, Ed, DrHo, AlTr}.


The paper is organized as follows. Section 2 reviews the necessary concepts of Lie groupoids and Lie algebroids, including adjoint and co-adjoint actions. Section 3 derives the Euler--Poincar\'e equations on Lie groupoids from a variational principle and extends them to include external forces. Section 4 treats the trivial Lie groupoid in detail and obtains the corresponding EP equations. Section 5 presents a generalized rigid body on the sphere as a running example. Section 6 applies the framework to collective cell migration on $S^2$, including a discussion of biological parameters and optimality. Section 7 offers concluding remarks and directions for future research.

\section{Preliminaries}
\subsection{Review of Lie Groupoid and Lie Algebroid Concepts}

\begin{definition}
\label{groupoid}
A groupoid $G$ over $M$, which we will denote by $G \rightrightarrows M$, consists of two sets $G$ and $M$ together with structure maps $\alpha, \beta, 1, \iota$ and $m$, where source map $\alpha: G \longrightarrow M$, target map $\beta:G \longrightarrow M$, unit map $1: M \longrightarrow G$, inverse map $\iota : G \longrightarrow G$ and multiplication map $ m: G_{2} \longrightarrow G$ where $ G_{2} = \lbrace (g,h) \in G \times G ~\vert ~ \alpha (g) = \beta (h) \rbrace$ is a subset of $G \times G$.

A Lie groupoid is a groupoid $G \rightrightarrows M$ for which $G$ and $M$ are smooth manifolds, $\alpha, \beta, 1, \iota$ and $m$ are smooth maps, and $\alpha,\beta$ are smooth submersions.
\end{definition}

\begin{definition}
\label{bisection}
Let $ G \rightrightarrows M$ be a Lie groupoid. A bisection of $G$ is a smooth map $ \sigma : M \longrightarrow G $ which is right inverse to $\alpha : G \longrightarrow M$ ($ \alpha \circ \sigma = id_{M}$) and $\beta \circ \sigma : M \longrightarrow M $ is a diffeomorphism.

Let $\sigma$ be a bisection of $G.$ The left translation corresponding to $\sigma$ is defined as:
\begin{eqnarray}
L_{\sigma} : G & \longrightarrow & G \nonumber \\
g & \longmapsto & \sigma (\beta g) g \nonumber
\end{eqnarray}
and the right translation corresponding to $\sigma$ is defined by:
\begin{eqnarray}
 R_{\sigma} :  G & \longrightarrow &  G   \nonumber  \\
g & \longmapsto & g \sigma \big( (\beta \circ \sigma )^{-1} (\alpha g ) \big). \nonumber
\end{eqnarray}
\end{definition}

\begin{definition}
\label{groupoid action}
Let $ G \rightrightarrows M$ be a Lie groupoid and $J : N \longrightarrow M$ be a smooth map. A smooth left action of Lie groupoid G on $J : N \longrightarrow M$ is a smooth map $\theta : G_{~ \alpha} \times_{~J} N \longrightarrow N $ which satisfies the following conditions:
\begin{enumerate}
\item For every $(g,n) \in G_{~ \alpha} \times_{~J} N,~~~~ J (g.n) = \beta (g),$ 
\item For every $n \in N,~~~~ 1_{J(n)} .n = n,$
\item For every $(g , g^{\prime}) \in G_{2} $ and $n \in J^{-1} (\alpha (g^{\prime})),~~~~ g . ( g^{\prime} . n ) = (g g^{\prime})  . n$
\end{enumerate}
(where $ g.n := \theta ( g , n )$ and $ \theta (g) (n):= \theta (g , n )$).
\end{definition}

\begin{definition}
A Lie algebroid A over a manifold M is a vector bundle $\tau : A \longrightarrow M$ with the following items:
\begin{enumerate}
\item A Lie bracket $[\vert ~,~ \vert] $ on the space of smooth sections $ \Gamma( \tau),$
$$[\vert ~,~ \vert]: \Gamma (\tau) \times \Gamma (\tau) \longrightarrow \Gamma (\tau) $$
$$(X , Y ) \longmapsto [\vert X , Y \vert].$$
\item A morphism of vector bundles $\rho : A \longrightarrow TM,$ called the anchor map, where $TM$ is the tangent bundle of $M,$
\end{enumerate}
such that the anchor and the bracket satisfy the following Leibniz rule:
$$[\vert X , f Y \vert] = f [\vert X , Y \vert] + \rho (X) (f) Y$$
where $X , Y \in \Gamma (\tau)$, $~f \in C^{\infty} (M)$ and $\rho (X) f$ is the derivative of $f$ along the vector field $\rho (X).$
\end{definition}

\begin{definition}
\label{associated Lie algebroid}
Let $G \rightrightarrows M $ be a Lie groupoid. Consider the vector bundle $\tau : AG \longrightarrow M,$ where 
$$AG := \ker T\alpha \vert_{1_{p}}$$
where $\alpha : G \longrightarrow M$ is the source map of the Lie groupoid $G \rightrightarrows M$, $T\alpha : TG \longrightarrow TM$ is the tangent map of $\alpha.$

It is well-known that $AG$ has a Lie algebroid structure as follows:
As we know, there exists a bijection between the space of sections $\Gamma (\tau)$ and the set of left (right) invariant vector fields on $G$ (see \cite{Mack}). 
If $X$ is a section of $\tau : AG \longrightarrow M,$ the right invariant and left invariant vector fields corresponding to $X$ are, respectively, defined by
\begin{eqnarray}
 \overrightarrow{X}(g) &=& T R_{g} ( X (\beta (g))) \nonumber  \\
 \overleftarrow{X}(g) &=& - T (L_{g} ) T (\iota) ( X (\alpha (g)),   \nonumber
\end{eqnarray}
where $L_{g}$ and $R_{g}$ are left translation and right translation corresponding to $g\in G.$  

Therefore, the Lie algebroid structure $([\vert ~,~ \vert], \rho ) $ on $AG$ can be introduced as follows:
\begin{enumerate}
\item The anchor map: 
$$ \rho: AG  \longrightarrow  TM  $$
$$ \rho(X)(x) = T_{1(p)} \beta (X(p)) $$
where $X \in \Gamma (\tau), p \in M.$
\item Lie bracket:     
 $$\Gamma (AG) \times \Gamma (AG) \longrightarrow \Gamma (AG)$$
 $$[\vert \overrightarrow{X , Y} \vert] := [ \overrightarrow{X} , \overrightarrow{Y} ], ~~~( [\vert \overleftarrow{X , Y} \vert] := - [ \overleftarrow{X} , \overleftarrow{Y} ] )$$ \\
where $ X , Y \in \Gamma (\tau)$ and $[~,~]$ is the standard Lie bracket of vector fields.
\end{enumerate}
\end{definition}

\begin{definition}
Let $ ( A, M, \pi, \rho , [\vert ~,~ \vert ] )$ be a Lie algebroid and $J : N \longrightarrow M$ be a smooth map.
An action of a Lie algebroid $A$ on the map $J : N \longrightarrow M$ is a map
$ \theta : \Gamma (A) \longrightarrow \mathfrak{X} (N) $ which satisfies the following properties:
\begin{enumerate}
\item $ \theta ( X + Y ) = \theta (X) + \theta (Y)$
\item $ \theta ( f X ) = J^{\ast} f \theta (X) $
\item $ \theta  ( [\vert X , Y \vert] ) = [ \theta (X) , \theta (Y) ]  $
\item $T J ( \theta (X) ) = \rho (X)$
\end{enumerate}
for all $ f \in C^{\infty} (M)$ and $ X, Y \in \Gamma^{\infty} (A).$ 
In addition, $J^{\ast} : C^{\infty} (M) \longrightarrow C^{\infty} (N) $ such that $J^{\ast} f = f \circ J \in C^{\infty} (N)$ is the pullback of $f$ by $J$.
\end{definition}

\begin{remark}
\label{action remark}
As mentioned in \cite{Bos}, every action $\theta$ of a Lie groupoid $G$ on $ J : N \longrightarrow M $ induces an action $\theta^{\prime}$ of the Lie algebroid $ A(G) $ on $ J : N \longrightarrow M $ as follows:
$$ \theta ^{\prime} (X) (n) := \dfrac{d}{dt} \vert _{t=0} ~\operatorname{Exp} ( t X )_{J(n)} . n.$$
\end{remark}

\begin{definition}
The Lie groupoid $G \rightrightarrows M $ is a regular Lie groupoid if the anchor
\begin{eqnarray}
( \beta , \alpha ) : G &\longrightarrow & M \times M \nonumber \\
g &\longmapsto & \big( \beta (g) , \alpha(g) \big)  \nonumber
\end{eqnarray}
is a map of constant rank.
\end{definition} 

\begin{remark}
Similar to what was said in previous work \cite{HA2}, throughout the article, we assume that $G$ is a regular Lie groupoid over $M.$
\end{remark}

\subsection{The Adjoint and Co-adjoint Actions}

Let $G \rightrightarrows M $ be a Lie groupoid. For an arbitrary element $p \in M,$ the isotropy group of $ G \rightrightarrows M $ is defined by $ I_{p} = \alpha ^{-1} (p) \cap \beta ^{-1} (p).$ As mentioned in \cite{HA2}, $I_{p}$ is a Lie group. Moreover, $I_{G} = ( \cup I_{p} )_{p \in M}$ is a groupoid over $M$ but it is not a smooth manifold in general (see Example A.10 in \cite{Schmeding}).

\begin{lemma}
The associated isotropy groupoid of a regular Lie groupoid $G \rightrightarrows M $ is a Lie groupoid.
\end{lemma}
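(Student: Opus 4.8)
The plan is to realise the isotropy groupoid as a preimage of the diagonal and then let regularity do the work through the constant–rank theorem. Write $\chi=(\beta,\alpha)\colon G\to M\times M$ for the map in the definition of regularity and $\Delta_M=\{(p,p):p\in M\}$ for the diagonal. Since $I_p=\alpha^{-1}(p)\cap\beta^{-1}(p)$ consists exactly of the arrows with $\alpha=\beta$, we have $I_G=\bigcup_p I_p=\chi^{-1}(\Delta_M)$, so the entire problem is to show that this one preimage is an embedded submanifold of $G$ on which the structure maps of $G$ restrict. Regularity enters here: by hypothesis $\chi$ has locally constant rank $r$, and because $\alpha$ (the second component of $\chi$) is a submersion we have $r\geq m:=\dim M$. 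The expected codimension of $I_G$ in $G$ is then $r-m$, which is the common dimension of the orbits $\mathcal{O}_p=\beta(\alpha^{-1}(p))$; equivalently $\dim I_G=n-r+m$ with $n=\dim G$, and the fibres $I_p$ all have dimension $n-r$.

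The crux is that $\chi$ is \emph{not} transverse to $\Delta_M$, so the elementary transversal–preimage theorem is unavailable and constant rank alone does not suffice; one must exploit the groupoid structure to pin the diagonal onto the right sheet of the image. First I would record that $\Delta_M\subseteq\operatorname{im}\chi$, since for every $p$ the unit satisfies $\chi(1_p)=(\beta(1_p),\alpha(1_p))=(p,p)$. By the rank theorem, near any $g\in I_G$ the map $\chi$ factors as $\chi=\iota\circ\pi$ with $\pi$ a submersion onto an $r$–dimensional manifold and $\iota$ an embedding, so that $R:=\chi(U)$ is an embedded $r$–submanifold of $M\times M$ through $(p,p)$ with $T_{(p,p)}R=\operatorname{im}d\chi_g$. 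Regularity of the orbit foliation makes this local image coincide with the local orbit equivalence relation, which contains the diagonal; hence $\Delta_M\subseteq R$ near $(p,p)$. I would confirm the infinitesimal side of this by checking $T_{(p,p)}\Delta_M\subseteq\operatorname{im}d\chi_g$: a local bisection $\sigma$ through $g$ gives $d\chi_g(d\sigma_p v)=(d(\beta\sigma)_p v,\,v)$, and since $\beta\circ\sigma$ maps every orbit to itself its differential is the identity modulo $T_p\mathcal{O}_p$; correcting by a source–fibre vector (which $d\chi_g$ sends into $T_p\mathcal{O}_p\oplus 0$) lands the combination exactly on $(v,v)$.

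With $\Delta_M$ sitting inside $R$ as an embedded submanifold of codimension $r-m$, I would corestrict $\chi$ to $\bar\chi\colon U\to R$; being of constant rank $r=\dim R$ it is a submersion, so $I_G\cap U=\bar\chi^{-1}(\Delta_M)$ is an embedded submanifold of $U$ of codimension $r-m$. These local pieces patch to exhibit $I_G$ as an embedded submanifold of $G$. It then remains to verify the groupoid axioms. On $I_G$ the source and target agree and give a single map $\alpha|_{I_G}=\beta|_{I_G}\colon I_G\to M$ for which the unit section $1$ of $G$ is a smooth right inverse; $I_G$ is closed under multiplication and inversion, because two composable isotropy arrows again fix the base point ($\alpha(g)=\beta(g)=\alpha(h)=\beta(h)$ forces $gh\in I_p$) and $\iota(I_p)=I_p$; and the restrictions of $m,\iota,1$ are smooth as restrictions of smooth maps to submanifolds. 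Finally $\alpha|_{I_G}$ is a submersion, its fibres being the $I_p$ of constant dimension $n-r$, so $I_G\rightrightarrows M$ is a Lie groupoid (indeed a Lie group bundle).

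The step I expect to be the main obstacle is precisely the one in the second paragraph: because the orbits need not be open, $\operatorname{im}d\chi_g+T_{(p,p)}\Delta_M$ is a proper subspace of $T_{(p,p)}(M\times M)$, so $\chi$ fails to be transverse to $\Delta_M$ and the naive preimage theorem does not apply. The whole weight of the proof lies in replacing transversality by the genuinely groupoid‑theoretic input — units forcing $\Delta_M\subseteq\operatorname{im}\chi$, and the regularity of the orbit foliation (made explicit through orbit‑preserving bisections) forcing $\Delta_M$ to lie in the local image sheet $R$ — thereby reducing the problem to the preimage of a submanifold under an honest submersion, where the conclusion is immediate.
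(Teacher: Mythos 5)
You have correctly isolated where the whole difficulty lies, but the step you invoke to resolve it is false, and the failure is fatal to the argument as written. Your second paragraph claims that because $\beta\circ\sigma$ maps every orbit into itself, its differential at $p$ is the identity modulo $T_p\mathcal{O}_p$, hence $\Delta_M\subseteq R$ near $(p,p)$. This does not follow: a diffeomorphism preserving each \emph{leaf} of a foliation need not preserve the local \emph{plaques}, so its differential need not be the identity on the normal space $N_p=T_pM/T_p\mathcal{O}_p$. What your argument really needs is that every isotropy arrow $g\in I_p$ acts trivially on $N_p$ (this induced map is independent of the bisection chosen through $g$), and that fails for arrows outside the identity component of $I_p$. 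Concretely, take the action groupoid $G=S^1\ltimes M$ with $M=S^1\times\mathbb{C}$ and action $z\cdot(w,\zeta)=(z^2w,\,z\zeta)$. Every orbit is one-dimensional and the anchor $(\beta,\alpha)$ has constant rank $r=4$, so $G$ is regular in exactly the sense defined in this paper. The arrow $g=(-1,(w_0,0))$ is isotropic, and the constant bisection $\sigma(m)=(-1,m)$ gives $\beta\circ\sigma(w,\zeta)=(w,-\zeta)$, which preserves every orbit (take $z=-1$ in the orbit parametrization) yet acts as $-\mathrm{id}$ on the normal directions. Accordingly the local image sheet $R$ through $((w_0,0),(w_0,0))$ does \emph{not} contain $\Delta_M$: diagonal points with $\zeta\neq0$ would require $z\zeta=\zeta$ with $z$ near $-1$, which is impossible. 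Indeed one computes $I_G=(\{1\}\times M)\sqcup(\{-1\}\times S^1\times\{0\})$, which near $g$ is $1$-dimensional rather than your predicted $n-r+m=3$; and the same example breaks your concluding verification, since $\alpha|_{I_G}$ on the component $\{-1\}\times S^1\times\{0\}$ is an embedding of a circle into the $3$-manifold $M$, not a submersion.

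Two consequences are worth recording. First, since the example satisfies every hypothesis as stated, no local repair of your transversality-replacement can succeed: the statement you reduce the lemma to (``$\Delta_M$ lies in the local image sheet of the anchor near every isotropy arrow'') is precisely where input beyond constant rank of the anchor must enter. Your plaque argument \emph{does} go through on the inner (identity-component) isotropy subgroupoid, where the normal action is trivial, and that is the setting in which such smoothness statements are classically available; for the full isotropy groupoid the result must rest on whatever additional hypotheses or conventions the cited reference \cite{Schmeding} carries, not on regularity alone as defined here. Second, note that the paper itself supplies no argument to compare against --- its proof is the bare citation ``see \cite{Schmeding}'' --- so your attempt cannot be measured against an in-paper proof; measured against the mathematics, the reduction in your first paragraph and the dimension counts are sound, but the crux step you yourself flagged as the main obstacle is exactly the point at which the proof, as written, fails.
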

\begin{proof}
See \cite{Schmeding}.
\end{proof}

For a Lie groupoid $G \rightrightarrows M$ we denote the associated isotropy Lie groupoid by $I_{G}$ and the Lie algebroid associated to the isotropy Lie groupoid by $A I_{G}$ and call it the isotropy Lie algebroid.

Let $ G \rightrightarrows M $ be a Lie groupoid and $I_{G}$ be its associated isotropy Lie groupoid. $G$ acts smoothly from the left on $ J: I_{G} \longrightarrow M $ by conjugation: $$ C(g) (g^{\prime} ):= g g^{\prime} g^{-1}.$$

On the other hand, the conjugation action induces an action of the Lie groupoid $G$ on $ A I_{G} \longrightarrow M $. We call this action the \textbf{adjoint action} of $G$ on $A I_{G}$, which can be defined as 
 $$Ad: G \times A I_{G} \longrightarrow AI_{G} $$
 $$Ad_{g} X := \dfrac{d}{dt} \Big\vert_{t=0}~ C(g) \operatorname{Exp} (tX)$$
where $p \in M$, $g \in G_{p} = \alpha^{-1} (p)$ (the $\alpha$-fibers over $p$), and $X \in (A I_{G})_p.$

According to Remark \ref{action remark}, the action $Ad$ induces an adjoint action of $AG $ on $ A I_{G}  \longrightarrow M $ as 
$$ad: AG \times AI_G \to AI_G$$
$$ ad_X Y = ad (X) (Y) := \dfrac{d}{dt} \Big\vert_{t=0}~ Ad (\operatorname{Exp} (tX) ) Y $$
where $ X \in (AG)_{p} ,~ Y \in (A I_{G})_{p}$ and $ p \in M.$ \\
It is clear that for $ X \in \Gamma(AG) $ and $Y \in  \Gamma(A I_{G} )$
$$ ad_X (Y) = [\vert X , Y \vert].$$ 

Another action of $G$ on the dual bundle $A^{\ast} I_{G} $, called the \textbf{co-adjoint action} of $G$, is defined for $ g \in G_{p},~ \xi \in (A^{\ast} I_{G})_p$ as 
$$ Ad^{\ast} : G \times A^{\ast} I_{G} \longrightarrow A^{\ast} I_{G}  $$
$$ Ad_{g} ^{\ast} \xi (X) := \xi ( Ad_{g^{-1}} X ).  $$
In other words, $$ ~~~~~~~~\langle Ad_{g} ^{\ast} \xi , X \rangle = \langle \xi , Ad_{g^{-1}} X\rangle. $$
Also, according to Remark \ref{action remark}, the action $Ad^{\ast}$ induces the so-called co-adjoint action of the Lie algebroid $AG$ on $A^{\ast} I_{G} $ which for $ \xi \in (A^{\ast} I_{G})_p $ is defined by
$$ ad^{\ast} : AG \times A^{\ast} I_{G}  \longrightarrow A^{\ast} I_{G} $$
$$ ad_{X} ^{\ast} \xi (Y) := \xi ( ad_{-X} (Y) ) = \xi ( [\vert Y,X \vert] ) $$
or $$ ~~~~~~~~\langle ad_{X} ^{\ast} \xi , Y  \rangle = \langle \xi , ad (-X) Y \rangle. $$

\section{Euler--Poincar\'e Equations on Lie Groupoids}
\subsection{Euler--Poincar\'e Equations on Lie Groups}

Let $G$ be a Lie group with Lie algebra $\mathfrak{g}$ and dual space $\mathfrak{g}^*$.  
For a curve $g(t)\in G$, define the left-trivialized velocity
\[\xi(t) := g(t)^{-1}\dot g(t) \in \mathfrak{g}.\]
If the Lagrangian $L:TG\to\mathbb{R}$ is left-invariant, it reduces to 
\[L(g,\dot g) = \ell(\xi), \qquad \ell:\mathfrak{g}\to\mathbb{R}.\]

\subsection*{Constrained variation}
Define $\eta(t) = g(t)^{-1}\delta g(t)\in \mathfrak{g}$ with $\eta(t_0)=\eta(t_1)=0$.  
Then the constrained variation of $\xi$ is
\[\delta \xi = \dot\eta + [\xi,\eta] = \dot\eta + \operatorname{ad}_\xi \eta.\]

\subsection*{Euler--Poincar\'e equation}
From Hamilton's principle $\delta \int \ell(\xi)\,dt=0$, one obtains
\[\boxed{\;\frac{d}{dt}\frac{\delta \ell}{\delta \xi}
+ \operatorname{ad}^*_\xi \frac{\delta \ell}{\delta \xi} = 0\;}\]
where $\tfrac{\delta \ell}{\delta \xi}\in\mathfrak{g}^*$ and $\operatorname{ad}^*_\xi$ is the coadjoint operator:
\[\langle \operatorname{ad}^*_\xi \mu, \eta\rangle
= \langle \mu, [\xi,\eta]\rangle,
\qquad \mu\in \mathfrak{g}^*,\ \eta\in\mathfrak{g}.\]

\subsection*{With advected quantities}
If the Lagrangian also depends on an advected variable $a(t)\in V^*$, then
\[\frac{d}{dt}\frac{\delta \ell}{\delta \xi}
+ \operatorname{ad}^*_\xi \frac{\delta \ell}{\delta \xi}
= \frac{\delta \ell}{\delta a} \diamond a,
\qquad
\dot a = -\,\xi\cdot a,\]
where the diamond operator $\diamond : V \times V^* \to \mathfrak{g}^*$ is defined by
\[\langle b\diamond a, \xi \rangle = \langle b, -\,\xi\cdot a \rangle.\]

\subsection*{Example: Rigid Body ($SO(3)$)}
For $G=SO(3)$ with $\mathfrak{so}(3)\cong \mathbb{R}^3$,  
the reduced Lagrangian is
\[\ell(\Omega) = \tfrac12\, \Omega^\top I \Omega,\]
where $I$ is the inertia tensor.  
The Euler--Poincar\'e equation becomes the classical Euler rigid body equations:
\[\dot M + \Omega \times M = 0,
\qquad M = I\Omega.\] \cite{HolMarRat}

\subsection{Variational problem on Lie groupoids}

\begin{definition}
Let $G\rightrightarrows M$ be a Lie groupoid. A variational problem on $G \rightrightarrows M$ is defined by a function $L$ on $TG$ as follows:
\begin{eqnarray}\label{VPQ}
\min_{g(\cdot)}\int^T_0L(g, \dot{g})dt
\end{eqnarray}
Subject to: $g$ is a piecewise $C^1$ curve in $G$ satisfying $g(0)=g_0$, $g(T)=g_T$.
\end{definition}

Let there exist a reduced function $l$ on the Lie algebroid $AG$ of $G$ such that:
\begin{eqnarray}
L(g, \dot{g})=L(1_{\alpha(g)}, R^{-1}_{g^*}\dot{g})=l(R^{-1}_{g*}\dot{g}),
\end{eqnarray}
where $R_g$ is right multiplication on $G$ by $g\in G,$ and $\dot{g}\in T_gG,$ so that $R^{-1}_{g*}\dot{g} \in T^\alpha_{1_\alpha(g)}G = AG.$

Now, let $\eta_1, \ldots, \eta_n$ be a basis of sections of $\tau: A^*G\to M,$ i.e. $\eta_i\in \Gamma(A^*G)$ and $\hat{l}: \mathbb{R}^n\to \mathbb{R}$ such that:
\begin{equation}
L(g, \dot{g})= \hat{l}(\eta_1(R^{-1}_{g^*}\dot{g}), \ldots , \eta_n(R^{-1}_{g^*}\dot{g})),
\end{equation}
Now the 1-forms $\omega_k:=R^{-1^*}_{g^*}\eta_k,$ $1\leq k\leq n,$ are right invariant on $G$ and
\begin{equation}
L(g, \dot{g})= \hat{l}(\omega_1(\dot{g}), \ldots , \omega_n(\dot{g})),
\end{equation} 
Because $R_{g^*}^*\omega_k(\dot{g})=\omega_k(R_{g^*}(\dot{g})),$ so $R_{g^*}^*\omega_k(R^{-1}_{g^*}\dot{g})=\omega_k(\dot{g})$ or $\eta_k(R^{-1}_{g^*}\dot{g})=\omega_k(\dot{g}).$

Under the above we have 
\begin{eqnarray}{\label{DI}}
\delta \int^T_0L(g, \dot{g})dt&=&\delta \int^T_0\hat{l}(\eta_1(R^{-1}_{g^*}\dot{g}), \ldots , \eta_n(R^{-1}_{g^*}\dot{g}))dt\\
&=&\int^T_0\sum_k\frac{\partial\hat{l}}{\partial{\eta_k}}\delta(\omega_k(\dot{g}))dt\nonumber,
\end{eqnarray}
where
\begin{eqnarray}
 \delta g(t) = \left.\left(\frac{\partial}{\partial \epsilon}\right)g(t, \epsilon)\right|_{\epsilon=0}\in T_{g(t)} G,\;\; \delta g(0) =\delta g(T) =0
\end{eqnarray}
is a variation of a piecewise $C^1$ trajectory $g(\cdot)$ satisfying the condition in \ref{VPQ} and
$$ (t, \epsilon) \to g(t, \epsilon), \;\;\; 0\leq t \leq T, \;\; \epsilon \in (-\delta, \delta)\subset \mathbb{R},$$
$$(i) \;g(t, 0) = g(t), t \in [0, T]\;\; , \;\; (ii) \; g(0, \epsilon) = g_0\; , \; g(T, \epsilon) = g_T.$$

If $\bar{g}(t)=(g(t), \dot{g}(t)), \dot{g}(t)\in T_{g(t)} Q$ we have 
$$\delta\bar{g}(t)=(\delta g(t), \frac{d}{dt}\delta g(t))\in T_{\bar g(t)} TG.$$

Now by applying the following identity 
\begin{eqnarray}
d\omega_k (\dot{g}, \delta{q})=\frac{d}{d t}\omega_k(\delta{g})-\frac{\partial}{\partial \epsilon}\omega_k(\dot{g}),\nonumber
\end{eqnarray}
or \begin{eqnarray}\label{Delta L}
d\omega_k (\dot{g}, \delta{q})=\frac{d}{dt}\omega_k(\delta{g})-\delta(\omega_k(\dot{g})),
\end{eqnarray}
we can rewrite \ref{DI} in the form
\begin{eqnarray}\label{DI2}
\delta \int^T_0L(g, \dot{g})dt=-\int^T_0 \sum_k\left(\frac{d}{dt}\left(\frac{\partial\hat{l}}{\partial{\eta_k}}\right)\omega_k(\delta{g_t})+ \frac{\partial\hat{l}}{\partial{\eta_k}}d\omega_k(\dot{g}, \delta g_t)\right)dt,
\end{eqnarray}
Let $W_t$ and $X_t$ be two vector fields on the Lie groupoid $G,$ i.e., $W_t, X_t \in AG,$ and let $\delta g_t=R_{g^*}W_t$, $\dot{g_t}=R_{g^*}X_t.$ As the 1-forms $\omega_k$ on $G$ are right invariant, for time-varying elements $W_t, X_t$ of $AG$, we have 
\begin{eqnarray}\label{OE}
\omega_k (\delta g_t)= \omega_k (R_{g^*}W_t)=R^{-1^*}_{g^*}\eta_k (R_{g^*}W_t)=\eta_k (R^{-1}_{g^*}(R_{g^*}W_t))=\eta_k(W_t).
\end{eqnarray}
So we have
\begin{eqnarray}\label{ad}
d\omega_k(\dot{g_t}, \delta g_t)&=&d\omega_k(R_{g^*}X_t, R_{g^*}W_t)\\ \nonumber
  &=&  R_{g^*}X_t(\omega_k( R_{g^*}W_t))-R_{g^*}W_t( \omega_k( R_{g^*}X_t)) -\omega_k([R_{g^*}X_t, R_{g^*}W_t])\\ \nonumber
  &=& R_{g^*}X_t(R^{-1^*}_{g^*}\eta_k ( R_{g^*}W_t))-R_{g^*}W_t( R^{-1^*}_{g^*}\eta_k( R_{g^*}X_t)) -R^{-1^*}_{g^*}\eta_k(R_{g^*}[X_t, W_t])\\ \nonumber 
  &=&R_{g^*}X_t(\eta_k W_t)-R_{g^*}W_t(\eta_k X_t) -\eta_k([X_t, W_t])\\ \nonumber
  &=& -\eta_k([X_t, W_t])\\ \nonumber
  &=& -\eta_k(\operatorname{ad}_{X_t}W_t)\\ \nonumber
  &=& \operatorname{ad}^*_{X_t}(\eta_k)(W_t)                                  
\end{eqnarray}

\subsection{Lagrange-D'Alembert Principle}

The Lagrange-D'Alembert principle gives an alternate means of describing motion. Given a Lagrangian function $L,$ the motion is governed by solutions of the variational system:
\begin{eqnarray}\label{VPQ2}
\delta \int^T_0L(g, \dot{g})dt=0
\end{eqnarray}
subject to: $g$ is a piecewise $C^1$ curve in $G$ satisfying $g(0)=g_0$, $g(T)=g_T.$

\subsection{Euler--Poincar\'e Equations on Lie Groupoids}

Thus by the Lagrange-D'Alembert principle, the relations \ref{OE}, \ref{ad}, and the expression \ref{DI2} yield the following necessary conditions as a flow on $AG:$ 
\begin{eqnarray}
\sum_k \frac{d}{dt}\left(\frac{\partial\hat{l}}{\partial{\eta_k}}\right)\eta_k + \operatorname{ad}^*_{X_t}\frac{\partial\hat{l}}{\partial{\eta_k}}(\eta_k)=0\nonumber
\end{eqnarray}
Analogous to Lie groups, we call these equations the Euler--Poincar\'e equations on Lie groupoids. This may be written in the form 
\begin{equation}\label{EP}
\frac{d}{dt}\frac{\partial l}{\partial{X_t}} + \operatorname{ad}^*_{X_t}\left(\frac{\partial l}{\partial{X_t}}\right)=0
\end{equation}
Therefore, under the symmetry condition \ref{DI2}, the external flow on $TG$ for the variational problem \ref{VPQ} is reduced to the flow \ref{EP} on $AG$ together with the kinematic flow 
\begin{equation}
\dot{g_t}=R_{g_t^*}X_t.
\end{equation}
We now consider this result in the context of external forces governed by the Lagrange-D'Alembert principle
\begin{eqnarray}\label{VPQF}
\delta \int^T_0L(g, \dot{g})dt+\int^T_0F(\delta g)dt=0
\end{eqnarray}
subject to: $g$ is a piecewise $C^1$ curve in $G$ satisfying $g(0)=g_0$, $g(T)=g_T.$
By combining equations \ref{DI2} and \ref{VPQF} we obtain the system:
\begin{equation}\label{EPF}
\frac{d}{dt}\frac{\partial l}{\partial{X_t}} + \operatorname{ad}^*_{X_t}\left(\frac{\partial l}{\partial{X_t}}\right) = R^*_{g_t}F(t).
\end{equation}

\section{Example: Trivial Lie Groupoid and Generalized Rigid Body}

As a main example we consider the trivial Lie groupoid. For more details see Mackenzie and Haghighatdoost.
Let $\Upsilon = M \times \mathfrak{G} \times M$ be the trivial Lie groupoid, where $M$ is a smooth manifold and $\mathfrak{G}$ is a Lie group. 

As we know, the tangent bundle and the Lie algebroid of $\Upsilon$ are $T\Upsilon = TM \oplus T \mathfrak{G} \oplus TM$ and $A\Upsilon = TM \oplus (M \times \mathfrak{g}),$ respectively, where $TM$ is the tangent bundle of $M$ and $\mathfrak{g}$ is the Lie algebra of the Lie group $\mathfrak{G}.$
Consider a variational problem on $\Upsilon$ as follows:
\begin{eqnarray}\label{EVPQ}
\min_{g(\cdot)}\int^T_0L(g, \dot{g})dt
\end{eqnarray}
subject to: $g$ is a piecewise $C^1$ curve in $\Upsilon$ satisfying $g(0)=g_0$, $g(T)=g_T.$
As we know, every element $g$ in $\Upsilon$ is represented as $g=(x, a, y),$ where $x , y \in M$ and $a\in \mathfrak{G}.$ Here $L:T\Upsilon=TM \oplus T \mathfrak{G}\oplus TM\to \mathbb{R}$ is the Lagrangian. We have: 
\begin{eqnarray}
L(g, \dot{g})=L(X, U)= L(1_y, \tilde{R}^{-1}_{g^*}(\dot{g})) = L(1_y, \tilde{R}^{-1}_{g^*}(X\oplus U))=L(1_y, R^{-1}_{a^*}(U)),
\end{eqnarray}
where $g=(x, a, y), \dot{g}=X\oplus U, X\in TM$ and $U\in \mathfrak{g}$ and $R: \mathfrak{G}\to \mathfrak{G}$ is the right transformation on the Lie group $\mathfrak{G}$ (see Haghighatdoost and Ayoubi).

Now, we consider $\tilde{l}$ as a reduced function on the Lie algebroid $A\Upsilon= TM\oplus (M\times \mathfrak{g})$ as follows: 
\begin{eqnarray}
L(g, \dot{g})=L(1_y, \tilde{R}^{-1}_{g^*}(\dot{g})) = L(1_y, \tilde{R}^{-1}_{g^*}(X\oplus U)) = \tilde{l}(X\oplus R^{-1}_{a^*}(U)).
\end{eqnarray}

\subsection{Euler--Poincar\'e Equation on the Trivial Lie Groupoid}

Let $\eta_i\in \Gamma(A^*\Upsilon)=\Gamma(T^*M\oplus(M\times {\mathfrak{g}}^*)), 1\leq i\leq n$ be the basis sections of the cotangent bundle of $A^*\Upsilon,$ i.e., $\eta_i: M\to T^*M\oplus (M\times \mathfrak{g}).$ So
\begin{eqnarray}
 \eta_i(x)=\theta_i\oplus \omega_i, \quad 1\leq i\leq n,
\end{eqnarray}
where $\theta_x\in T^*_x M, \omega_i\in \mathfrak{g}^*$ and $x\in M.$ Let $\hat{\tilde{l}}: \mathbb{R}^n\to \mathbb{R}$ be a function such that:
\begin{eqnarray}
L(g, \dot{g})&=&\hat{\tilde{l}}(\eta_1(\tilde{R}^{-1}_{g^*}(\dot{g})), \ldots, \eta_n(\tilde{R}^{-1}_{g^*}(\dot{g})))\\ \nonumber
&=&\hat{\tilde{l}}(\eta_1(X\oplus R^{-1}_{a^*}(U)), \ldots, \eta_n(X\oplus R^{-1}_{a^*}(U)))\\ \nonumber 
&=&\hat{\tilde{l}}(\theta_1(X)\,\omega_1( R^{-1}_{a^*}(U)), \ldots, \theta_n(X)\,\omega_1( R^{-1}_{a^*}(U)))
\end{eqnarray}
If we suppose $\psi_k=\tilde{R}^{-1*}_{g^*}\eta_k,$ then $\eta_k=\tilde{R}^{*}_{g^*}\psi_k$ and we have:
\begin{eqnarray}\label{etapsi}
\eta_k(\tilde{R}^{-1}_{g^*}(\dot{g}))=\eta_k(\tilde{R}^{-1}_{g^*}(X\oplus U))=\eta_k(X\oplus {R}^{-1}_{g^*}(U))=\theta_k(X)\,\omega_k(V),
\end{eqnarray}
where $V={R}^{-1}_{g^*}(U).$ Also we can write $\eta_k(\tilde{R}^{-1}_{g^*}(\dot{g}))= \tilde{R}^*_{g^*}\psi_k(\tilde{R}^{-1}_{g^*}(\dot{g}))= \psi_k(\dot{g}).$
By combining \ref{etapsi} and the last relation we get:
\begin{eqnarray}
\psi_k(X\oplus U)=\theta_k(X)\,\omega_k({R}^{-1}_{a^*}(U)).
\end{eqnarray}
According to \ref{DI}, \ref{DI2}, \ref{ad} and \ref{etapsi} we will have:
\begin{eqnarray}\label{Ead}
d\psi_k(\dot{g_t}, \delta g_t)&=&-\eta_k([Z_k, W_k])\\ \nonumber
&=&-\eta_k(\operatorname{ad}_{Z_t}W_t)\\ \nonumber
  &=&  -(\theta_k\oplus \omega_k)([Y_t\oplus V_t, X_t\oplus U_t])\\ \nonumber
  &=& -\theta_k([Y_t, X_t])\,\omega_k([V_t, U_t])\\ \nonumber 
  &=&-\theta_k(\operatorname{ad}_{Y_t}X_t)\,\omega_k(\operatorname{ad}_{V_t}U_t),                               
\end{eqnarray}
where $\dot{g_t}=\tilde{R_{g^*}}Z_t$ and $Z_t=Y_t\oplus V_t \in A\Upsilon= TM\oplus (M\times \mathfrak{g}).$ Also 
\begin{eqnarray}{\label{EDI}}
\delta \int^T_0L(g, \dot{g})dt&=&\int^T_0\sum_k \frac{d}{dt}\frac{\partial\hat{l}}{\partial{\eta_k}}\theta_k(X_t)\omega_k(U_t)+ \frac{\partial\hat{l}}{\partial{\eta_k}}\theta_k(\operatorname{ad}_{Y_t}X_t)\omega_k(\operatorname{ad}_{V_t}U_t)dt\\\nonumber
&=&\int^T_0\sum_k \frac{d}{dt}\frac{\partial\hat{l}}{\partial{\eta_k}}(\theta_k\oplus \omega_k)(X_t\oplus U_t)+ \operatorname{ad}^*_{Y_t}\left(\frac{\partial\hat{l}}{\partial{\eta_k}}\right)(\theta_k)(X_t)\,\operatorname{ad}^*_{V_t}(\omega_k)(U_k)dt\\ \nonumber
&=&\int^T_0\sum_k \frac{d}{dt}\frac{\partial\hat{l}}{\partial{\eta_k}}(\theta_k\oplus \omega_k)(X_t\oplus U_t)+ \frac{\partial\hat{l}}{\partial{\eta_k}}\left( \operatorname{ad}^*_{Y_t}(\theta_k)\oplus \operatorname{ad}^*_{V_t}(\omega_k)\right)(X_t\oplus U_t)dt
\end{eqnarray}
So, by the Lagrange-D'Alembert principle 
\begin{eqnarray}{\label{EDI2}}
\delta \int^T_0L(g, \dot{g})dt= 0 \;\; \text{ if and only if } \;\; \sum_k \frac{d}{dt}\frac{\partial\hat{l}}{\partial{\eta_k}}(\theta_k\oplus \omega_k)+ \frac{\partial\hat{l}}{\partial{\eta_k}}\left( \operatorname{ad}^*_{Y_t}\theta_k\oplus \operatorname{ad}^*_{V_t}\omega_k\right)=0
\end{eqnarray}
or 
\begin{eqnarray}{\label{EDI3}}
 \frac{d}{dt}\frac{\partial l}{\partial{Z_t}} + \left( \operatorname{ad}^*_{Y_t}\oplus \operatorname{ad}^*_{V_t}\right) \frac{\partial l}{\partial{Z_t}} = 0 \;\; , \;\; Z_t = Y_t\oplus V_t \in A\Upsilon
\end{eqnarray}
The last equation is the Euler--Poincar\'e equation on the trivial Lie groupoid.

\section*{Generalized Rigid Body on the Sphere}

For the trivial Lie groupoid $S^2 \times SO(3) \times S^2$, the reduced variables are
\(\eta(x,t)\in\mathfrak{so}(3)\) and \(X(x,t)\in T_x S^2\).
A natural reduced Lagrangian, independent of position, is
\begin{equation}
\ell(\eta,X) \;=\; \tfrac12 \int_{S^2} \Big( \langle I \eta(x), \eta(x)\rangle \;+\; \alpha |X(x)|^2 \Big)\, dS(x),
\end{equation}
with momenta
\begin{equation}
\mu \;=\; \frac{\delta \ell}{\delta \eta} = I\eta,
\qquad
m \;=\; \frac{\delta \ell}{\delta X} = \alpha X^\flat.
\end{equation}

\section*{Euler--Poincar\'e Equations (Position-Independent Case)}
Since the Lagrangian does not explicitly depend on $x \in S^2$, the EP equations reduce to
\begin{equation}
\partial_t(\mu \oplus m)
+ \big( \operatorname{ad}^*_\eta \oplus \operatorname{ad}^*_X \big)(\mu \oplus m) \;=\; 0.
\end{equation}

In component form, this is
\begin{align}
\partial_t \mu \;+\; \mathcal{L}_X \mu \;+\; \mu \times \eta &= 0, \\[6pt]
\partial_t m \;+\; \mathcal{L}_X m \;+\; \mu \diamond \eta &= 0.
\end{align}

\section*{Remarks}
\begin{itemize}
  \item $\mathcal L_X$ denotes the Lie derivative along the vector field $X$ on $S^2$.
  \item $\mu \times \eta$ is the coadjoint action in $\mathfrak{so}(3)^* \cong \mathbb R^3$.
  \item The diamond operator is defined by
  \[
  \langle \mu \diamond \eta, Y \rangle = -\int_{S^2} \langle \mu, \mathcal L_Y \eta \rangle\, dS,
  \quad \forall Y \in \mathfrak X(S^2).
  \]
\end{itemize}

\section{Application: Collective Cell Migration on $S^2$}

We illustrate a biological application of the Euler--Poincar\'e (EP) equation formulated on the trivial Lie groupoid $S^2 \times SO(3) \times S^2$. The base manifold $S^2$ represents a spherical tissue, while the Lie group $SO(3)$ models the internal polarity of migrating cells (e.g.\ actin orientation). The velocity field $X$ describes motion of cells on the surface, and the group component $\eta$ encodes internal polarity dynamics.

\section*{Reduced Lagrangian and EP equations}
The reduced Lagrangian is
\begin{equation}
  \ell(\eta,X) = \tfrac12 \int_{S^2} \big( \langle I\eta,\eta\rangle + \alpha |X|^2 \big)\, dS,
\end{equation}
where the first term penalizes the energetic cost of polarity rearrangement and the second term penalizes mechanical locomotion. 
Let the conjugate momenta be $\mu = \delta\ell/\delta\eta$ and $m=\delta\ell/\delta X$. 
The EP equations on the semi-direct product read, in compact block form,
\begin{equation}
  \partial_t(\mu \oplus m) + \big(\operatorname{ad}^*_{\eta} \oplus \operatorname{ad}^*_{X}\big)(\mu \oplus m) = 0,
\end{equation}
i.e.
\begin{align}
\partial_t \mu + \mathcal{L}_X \mu + \mu \times \eta &= 0, \\
\partial_t m + \mathcal{L}_X m + \mu \diamond \eta &= 0.
\end{align}
Here $\mathcal{L}_X$ denotes the Lie derivative along $X$, and $(\mu\diamond\eta)\in\Omega^1(S^2)$ is defined by
$\langle \mu\diamond\eta, Y\rangle = -\int_{S^2}\langle \mu, \mathcal{L}_Y \eta\rangle\, dS$ for all $Y\in\mathfrak X(S^2)$.

\section*{Biological parameters and optimality}
For a spherical embryo of radius $R=100\,\mu m$ and an arc length $L \approx 105\,\mu m$ (geodesic of $60^\circ$), choosing realistic parameters $\alpha = 10^{-16}\, \mathrm{J}\cdot \mathrm{min}^2/\mu m^2$ and $\lambda = 10^{-17}\, \mathrm{J}/\mathrm{min}$ leads to the energy--time tradeoff
\begin{equation}
  J_\lambda(T) = \tfrac12 \alpha \frac{L^2}{T} + \lambda T,
\end{equation}
whose minimizer is $T^*=\sqrt{ \tfrac{1}{2}\alpha L^2/\lambda }\approx 74\,\mathrm{min}$, with average velocity $v^*\approx 1.4\,\mu m/\mathrm{min}$ (see Figure \ref{fig:num-ep-combined}).

\begin{figure}[h]
  \centering
  \includegraphics[width=0.8\textwidth]{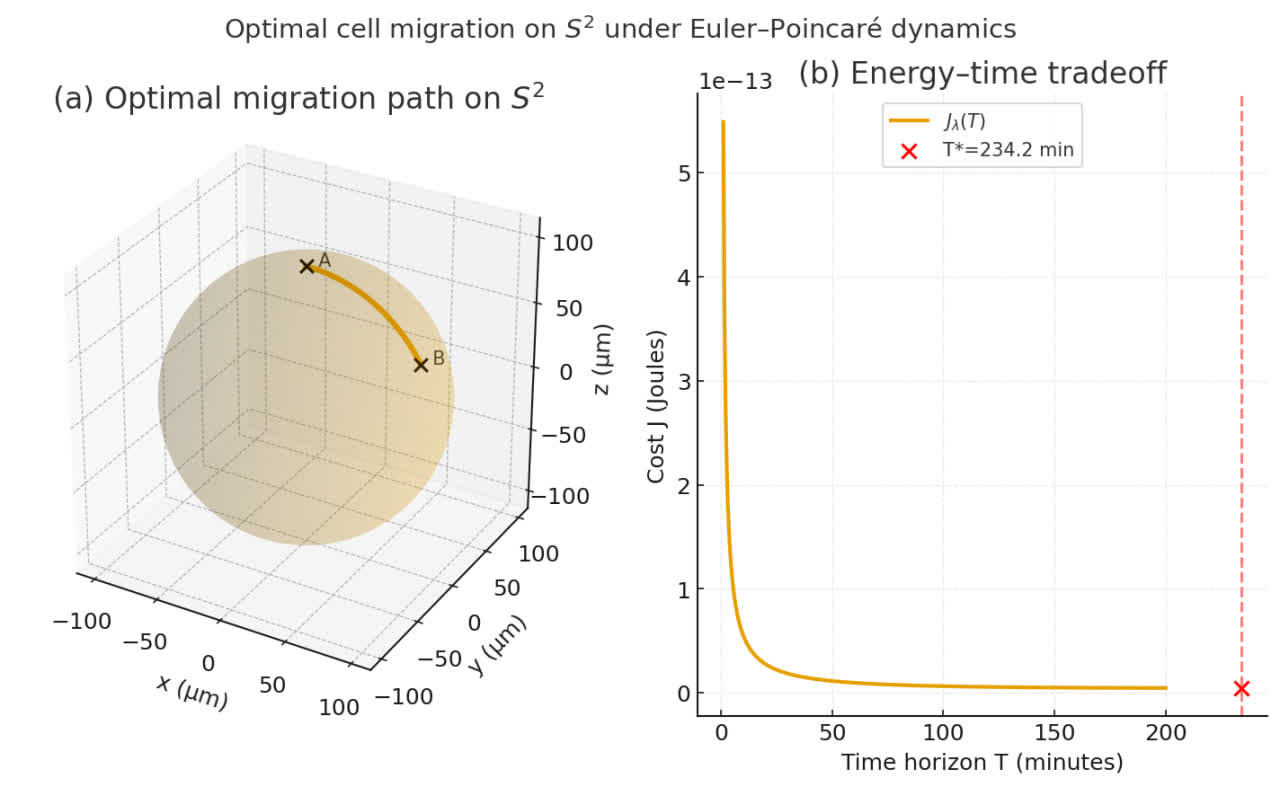}
  \caption{(a) Optimal geodesic trajectory on $S^2$. (b) Cost $J_\lambda(T)$ with unique optimum at $T^*$.}
  \label{fig:num-ep-combined}
\end{figure}

\section*{Comparison Between Lie Group and Trivial Lie Groupoid}

\renewcommand{\arraystretch}{1.4}
\setlength{\tabcolsep}{8pt}
\begin{tabular}{|p{3cm}|p{5cm}|p{7cm}|}
\hline
\textbf{Aspect} & \textbf{Lie Group $SO(3)$} & \textbf{Trivial Lie Groupoid $S^2 \times SO(3) \times S^2$} \\
\hline
\textbf{State variables} & 
$\eta(t)\in \mathfrak{so}(3)$ (angular velocity / polarity rotation) &
$(\eta(t), X(x,t))$, with $X\in \mathfrak{X}(S^2)$ (internal rotation + spatial velocity field) \\
\hline
\textbf{Momenta} &
$\mu = \delta \ell/\delta \eta \in \mathfrak{so}(3)^*$ &
$\mu = \delta \ell/\delta \eta,\; m = \delta \ell/\delta X$ \\
\hline
\textbf{EP equations} &
$\dot\mu + \operatorname{ad}^*_\eta \mu = 0$ &
$\dot\mu + \mathcal L_X\mu + \mu\times\eta=0,$ \\
& &
$\dot m + \mathcal L_X m + \mu\diamond\eta=0$ \\
\hline
\textbf{Geometry} &
Dynamics constrained to the Lie group $SO(3)$ &
Dynamics on the semi-direct product $\mathfrak{so}(3)\oplus TS^2$ \\
\hline
\textbf{Biological meaning} &
Cell \textbf{rotates its polarity only}, no displacement on tissue &
Cell undergoes \textbf{both polarity rotation} and \textbf{spatial migration} on the spherical tissue \\
\hline
\textbf{Optimal control cost} &
$J(T)=\int_0^T \tfrac12\langle I\eta,\eta\rangle dt + \lambda T$ 
$\;\;\rightarrow$ minimal internal rearrangement &
$J(T)=\int_0^T\int_{S^2} \tfrac12(\langle I\eta,\eta\rangle+\alpha |X|^2)\,dS\,dt+\lambda T$ 
$\;\;\rightarrow$ minimal internal + locomotion energy \\
\hline
\textbf{Optimal solution} &
Geodesic in $SO(3)$: uniform rotation with constant $\eta$ &
Coupled geodesic migration on $S^2$ plus rotation; both internal and spatial trajectories optimized \\
\hline
\end{tabular}

\section*{Reduced Model (Analytical/Numerical Illustration)}
We consider the trivial Lie groupoid $S^2 \times SO(3) \times S^2$ with the reduced Lagrangian
\begin{equation}
\ell(\eta,X) = \tfrac12 \int_{S^2} \big(\langle I\eta,\eta\rangle + \alpha |X|^2\big)\, dS,
\end{equation}
and specialize to geodesic paths on $S^2$ with constant speed. For fixed endpoints $A,B\in S^2$ at geodesic distance $L$ (here $L \approx 105\,\mu m$), the energy--time tradeoff
\begin{equation}
J_\lambda(T) = \tfrac12 \alpha \frac{L^2}{T} + \lambda T
\end{equation}
admits a closed-form optimum $T^* = \sqrt{\tfrac{1}{2}\alpha L^2/\lambda}$. With
$\alpha = 10^{-16}\,\mathrm{J}\cdot \mathrm{min}^2/\mu m^2$ and $\lambda = 10^{-17}\,\mathrm{J}/\mathrm{min}$
we obtain $T^* \approx 74\,\mathrm{min}$ and $v^* = L/T^* \approx 1.4\,\mu m/\mathrm{min}$.

\section*{Full Field Model on $S^2$ (EP PDE + Optimal Control)}
We solve the EP system for $(\mu,m)$ coupled to $(\eta,X)$ on $S^2$:
\begin{align}
\partial_t \mu + \mathcal L_X \mu + \mu \times \eta &= 0,\\
\partial_t m + \mathcal L_X m + \mu \diamond \eta &= 0,
\end{align}
with the simple constitutive choice $m=\alpha X^\flat$. We minimize the space--time cost
\begin{equation}
\mathcal J[\eta,X] = \int_0^T\!\!\int_{S^2} \tfrac12 \big(\langle I\eta,\eta\rangle + \alpha|X|^2\big)\, dS\, dt
\; + \; \lambda T,
\end{equation}
subject to EP constraints and boundary conditions (e.g.\ advection of tracers from $A$ to $B$).

\section{Remark on Numerical Aspects}
In this article our focus is on the theoretical framework and applications of the Euler--Poincar\'e equations on Lie groupoids. While numerical discretization and optimization schemes are important for practical simulations, we have not developed them here in detail. Such computational aspects — including mesh discretizations on $S^2$, time-stepping methods, and optimization algorithms — are left for future research.

\section{Conclusion}

In this work, we extended the Euler--Poincar\'e framework from Lie groups to trivial Lie groupoids and demonstrated its potential through both theoretical developments and illustrative examples. A particularly insightful application arises in the context of collective cell migration on spherical tissues. The EP equations capture the coupled dynamics of spatial migration and internal polarity rotation, providing a unified perspective that is not accessible when restricting to Lie groups alone.

Future research may explore numerical schemes adapted to groupoid-based dynamics, and investigate further applications ranging from morphogenetic processes to swarm robotics.

\small{

}

\end{document}